\numberwithin{equation}{section}
\newtheorem*{mainthm}{Main theorem}
\newtheorem{thm}{Theorem}[section]
\theoremstyle{definition}
\newtheorem{df}[thm]{Definition}
\newtheorem{claim}[thm]{Claim}
\newcommand{\R}{\mathbf{R}}
\newcommand{\C}{\mathbf{C}}
\newcommand{\N}{\mathbf{N}}
\newcommand{\cZ}{\mathcal{Z}}
\newcommand{\cX}{\mathcal{X}}
\newcommand{\cY}{\mathcal{Y}}
\newcommand{\cH}{\mathcal{H}}
\newcommand{\Ad}{\operatorname{Ad}}
\newcommand{\rL}{\mathord{\text{\rm L}}}
\newcommand{\rC}{\mathord{\text{\rm C}}}
\newcommand{\dom}{\mathord{\text{\rm dom}}}
\newcommand{\wm}{\mathord{\text{\rm wm}}}
\newcommand{\an}{\mathord{\text{\rm an}}}
\newcommand{\ap}{\mathord{\text{\rm ap}}}
\newcommand{\rE}{\mathord{\text{\rm E}}}
\newcommand{\Ball}{\mathord{\text{\rm Ball}}}
\newcommand{\dpr}{^{\prime\prime}}
\begin{document}

\title[Structure of modular invariant subalgebras in free Araki--Woods factors]{Structure of modular invariant subalgebras \\ in free Araki--Woods factors}

\begin{abstract}
We show that any amenable von Neumann subalgebra of any free Araki--Woods factor that is globally invariant under the modular automorphism group of the free quasi-free state is necessarily contained in the almost periodic free summand. 
\end{abstract}

\author{R\'emi Boutonnet}
\address{Institut de Math\'ematiques de Bordeaux \\ CNRS \\ Universit\'e Bordeaux I \\ 33405 Talence \\ FRANCE}
\email{remi.boutonnet@math.u-bordeaux1.fr}
\thanks{RB is supported by NSF Career Grant DMS 1253402}

\author{Cyril Houdayer}
\address{Laboratoire de Math\'ematiques d'Orsay\\ Universit\'e Paris-Sud\\ CNRS\\ Universit\'e Paris-Saclay\\ 91405 Orsay\\ FRANCE}
\email{cyril.houdayer@math.u-psud.fr}
\thanks{CH is supported by ERC Starting Grant GAN 637601}

\subjclass[2010]{46L10, 46L54, 46L36}
\keywords{Free Araki--Woods factors; Popa's asymptotic orthogonality property; Type ${\rm III}$ factors; Ultraproduct von Neumann algebras}

\maketitle

\section{Introduction}

{\em Free Araki--Woods factors} were introduced by Shlyakhtenko in \cite{Sh96}. In the framework of Voiculescu's free probability theory, these factors can be regarded as the type ${\rm III}$ counterparts of free group factors using Voiculescu's free Gaussian functor \cite{Vo85, VDN92}. Following \cite{Sh96}, to any orthogonal representation $U : \R \curvearrowright H_\R$ on a real Hilbert space, one  associates the {\em free Araki--Woods} von Neumann algebra $\Gamma(H_\R, U)\dpr$. The von Neumann algebra $\Gamma(H_\R, U)\dpr$ comes equipped with a unique {\it free quasi-free state} $\varphi_U$ which is always normal and faithful (see Section \ref{preliminaries} for a detailed construction). We have $\Gamma(H_\R,U)\dpr \cong\rL(\mathbf F_{\dim (H_\R)})$ when $U = 1_{H_\R}$ and $\Gamma(H_\R, U)\dpr$ is a full type ${\rm III}$ factor when $U \neq 1_{H_\R}$.

Let $U : \R \curvearrowright H_\R$ be any orthogonal representation. Using Zorn's lemma, we may decompose $H_\R = H_\R^{\ap} \oplus H_\R^{\wm}$ and $U = U^{\wm} \oplus U^{\ap}$ where $U^{\text{ap}} : \R \curvearrowright H_\R^{\ap}$ (resp.\ $U^{\wm} : \R \curvearrowright H_\R^{\wm}$) is the {\em almost periodic} (resp.\ {\em weakly mixing}) subrepresentation of $U : \R \curvearrowright H_\R$. Write $M = \Gamma(H_\R, U)\dpr$, $N = \Gamma(H_\R^{\ap}, U^{\ap})\dpr$ and $P = \Gamma(H_\R^{\wm}, U^{\wm})\dpr$ so that we have the following {\em free product} splitting
$$(M, \varphi_U) = (N, \varphi_{U^{\ap}}) \ast (P, \varphi_{U^{\wm}}).$$
Our main result provides a general structural decomposition for any von Neumann subalgebra $Q \subset M$ that is globally invariant under the modular automorphism group $\sigma^{\varphi_U}$ and shows that when $Q$ is moreover assumed to be amenable then $Q$ sits inside $N$. Our main theorem generalizes \cite[Theorem C]{HR14} to {\em arbitrary} free Araki--Woods factors.

\begin{mainthm}
Keep the same notation as above. Let $Q \subset M$ be any unital von Neumann subalgebra that is globally invariant under the modular automorphism group $\sigma^{\varphi_U}$. Then there exists a unique central projection $z \in \mathcal Z(Q) \subset M^{\varphi_U} = N^{\varphi_{U^{\ap}}}$ such that 
\begin{itemize}
\item $Qz$ is amenable and $Qz \subset zNz$ and

\item $Qz^\perp$ has no nonzero amenable direct summand and $(Q' \cap M^\omega)z^\perp = (Q' \cap M)z^\perp$ is atomic for any nonprincipal ultrafilter $\omega \in \beta(\N) \setminus \N$.
\end{itemize}
In particular, for any unital amenable von Neumann subalgebra $Q \subset M$ that is globally invariant under the modular automorphism group $\sigma^{\varphi_U}$, we have $Q \subset N$.
\end{mainthm}

Our main theorem should be compared to \cite[Theorem D]{Ho12b} which provides a similar result for crossed product ${\rm II_1}$ factors arising from free Bogoljubov actions of amenable groups. 

The core of our argument is Theorem \ref{thm-AOP-FAW} which generalizes \cite[Theorem 4.3]{HR14} to arbitrary free Araki--Woods factors. Let us point out that Theorem \ref{thm-AOP-FAW} is reminiscent of Popa's asymptotic orthogonality property in free group factors \cite{Po83} which is based on the study of central sequences in the ultraproduct framework. Unlike other results on this theme \cite{Ho12b,Ho14,HU15}, we do not assume here that the subalgebra $Q \subset M$ has a diffuse intersection with the free summand $N$ of the free product splitting $(M, \varphi_U) = (N, \varphi_{U^{\ap}}) \ast (P, \varphi_{U^{\wm}})$ and so we cannot exploit commutation relations of $Q$-central sequences with elements in $N$. Instead, we use the facts that $Q$ admits central sequences that are invariant under the modular automorphism group $\sigma^{\varphi_U^\omega}$ of the ultraproduct state $\varphi_U^\omega$ and that the modular automorphism group $\sigma^{\varphi_U}$ is weakly mixing on $P$.

\subsection*{Acknowledgments}
The present work was done when the authors were visiting the University of California at San Diego (UCSD). They thank Adrian Ioana and the Mathematics Department at UCSD for their kind hospitality.

\section{Preliminaries}\label{preliminaries}

For any von Neumann algebra $M$, we denote by $\mathcal Z(M)$ the centre of $M$, by $\mathcal U(M)$ the group of unitaries in $M$, by $\Ball(M)$ the unit ball of $M$ with respect to the uniform norm and by $(M, \rL^2(M), J, \rL^2(M)_+)$ the standard form of $M$. We say that an inclusion of von Neumann algebras $P \subset M $ is {\em with expectation} if there exists a faithful normal conditional expectation $\rE_P :  M  \to P$. All the von Neumann algebras we consider in this paper are always assumed to $\sigma$-finite.

Let $M$ be any $\sigma$-finite von Neumann algebra with predual $M_\ast$ and $\varphi \in M_\ast$ any faithful state. We write $\|x\|_\varphi = \varphi(x^* x)^{1/2}$ for all $x \in M$. Recall that on $\Ball(M)$, the topology given by $\|\cdot\|_\varphi$ coincides with the $\sigma$-strong topology. Denote by $\xi_\varphi \in \rL^2(M)_+$ the unique representing vector of $\varphi$. The mapping $M \to \rL^2(M) : x \mapsto x \xi_\varphi$ defines an embedding with dense image such that $\|x\|_\varphi = \|x \xi_\varphi\|_{\rL^2(M)}$ for all $x \in M$. We denote by $\sigma^\varphi$ the modular automorphism group of the state  $\varphi$.  The {\em centralizer} $M^\varphi$ of the state $\varphi$ is by definition the fixed point algebra of $(M, \sigma^\varphi)$.  

Recall from \cite[Section 2.1]{Ho12a} that two subspaces $E, F \subset H$ of a Hilbert space are said to be $\varepsilon$-{\em orthogonal} for some $0 \leq \varepsilon \leq 1$ if $|\langle \xi, \eta\rangle| \leq \varepsilon \|\xi\| \|\eta\|$ for all $\xi \in E$ and all $\eta \in F$. We will then simply write $E \perp_\varepsilon F$.

\subsection*{Ultraproduct von Neumann algebras}
Let $M$ be any $\sigma$-finite von Neumann algebra and $\omega \in \beta(\N) \setminus \N$ any nonprincipal ultrafilter. Define
\begin{align*}
\mathcal I_\omega(M) &= \left\{ (x_n)_n \in \ell^\infty(M) : x_n \to 0\ \ast\text{-strongly as } n \to \omega \right\} \\
\mathcal M^\omega(M) &= \left \{ (x_n)_n \in \ell^\infty(M) :  (x_n)_n \, \mathcal I_\omega(M) \subset \mathcal I_\omega(M) \text{ and } \mathcal I_\omega(M) \, (x_n)_n \subset \mathcal I_\omega(M)\right\}.
\end{align*}
The {\em multiplier algebra} $\mathcal M^\omega(M)$ is a C$^*$-algebra and $\mathcal I_\omega(M) \subset \mathcal M^\omega(M)$ is a norm closed two-sided ideal. Following \cite[\S 5.1]{Oc85}, we define the {\em ultraproduct von Neumann algebra} $M^\omega$ by $M^\omega := \mathcal M^\omega(M) / \mathcal I_\omega(M)$, which is indeed known to be a von Neumann algebra. We denote the image of $(x_n)_n \in \mathcal M^\omega(M)$ by $(x_n)^\omega \in M^\omega$. 

For every $x \in M$, the constant sequence $(x)_n$ lies in the multiplier algebra $\mathcal M^\omega(M)$. We will then identify $M$ with $(M + \mathcal I_\omega(M))/ \mathcal I_\omega(M)$ and regard $M \subset M^\omega$ as a von Neumann subalgebra. The map $\rE_\omega : M^\omega \to M : (x_n)^\omega \mapsto \sigma \text{-weak} \lim_{n \to \omega} x_n$ is a faithful normal conditional expectation. For every faithful state $\varphi \in M_\ast$, the formula $\varphi^\omega := \varphi \circ \rE_\omega$ defines a faithful normal state on $M^\omega$. Observe that $\varphi^\omega((x_n)^\omega) = \lim_{n \to \omega} \varphi(x_n)$ for all $(x_n)^\omega \in M^\omega$.

Let $Q \subset M$ be any von Neumann subalgebra with faithful normal conditional expectation $\rE_Q : M \to Q$. Choose a faithful state $\varphi \in M_\ast$ in such a way that $\varphi = \varphi \circ \rE_Q$. We have $\ell^\infty(Q) \subset \ell^\infty(M)$, $\mathcal I_\omega(Q) \subset \mathcal I_\omega(M)$ and $\mathcal M^\omega(Q) \subset \mathcal M^\omega(M)$. We will then identify $Q^\omega = \mathcal M^\omega(Q) / \mathcal I_\omega(Q)$ with $(\mathcal M^\omega(Q) + \mathcal I_\omega(M)) / \mathcal I_\omega(M)$ and be able to regard $Q^\omega \subset M^\omega$ as a von Neumann subalgebra. Observe that the norm $\|\cdot\|_{(\varphi |_Q)^\omega}$ on $Q^\omega$ is the restriction of the norm $\|\cdot\|_{\varphi^\omega}$ to $Q^\omega$. Observe moreover that $(\rE_Q(x_n))_n \in \mathcal I_\omega(Q)$ for all $(x_n)_n \in \mathcal I_\omega(M)$ and $(\rE_Q(x_n))_n \in \mathcal M^\omega(Q)$ for all $(x_n)_n \in \mathcal M^\omega(M)$. Therefore, the mapping $\rE_{Q^\omega} : M^\omega \to Q^\omega : (x_n)^\omega \mapsto (\rE_Q(x_n))^\omega$ is a well-defined conditional expectation satisfying $\varphi^\omega \circ \rE_{Q^\omega} = \varphi^\omega$. Hence, $\rE_{Q^\omega} : M^\omega \to Q^\omega$ is a faithful normal conditional expectation. For more on ultraproduct von Neumann algebras, we refer the reader to \cite{AH12, Oc85}.

\subsection*{Free Araki--Woods factors}
\label{sectionFAW}

Let $H_{\R}$ be any real Hilbert space and $U : \R \curvearrowright H_\R$ any orthogonal representation. Denote by $H = H_{\R} \otimes_{\R} \C = H_\R \oplus {\rm i} H_\R$ the complexified Hilbert space, by $I : H \to H : \xi + {\rm i} \eta \mapsto \xi - {\rm i} \eta$ the canonical anti-unitary involution on $H$ and by $A$ the infinitesimal generator of $U : \R \curvearrowright H$, that is, $U_t = A^{{\rm i}t}$ for all $t \in \R$. Moreover, we have $IAI = A^{-1}$. Observe that $j : H_{\R} \to H :\zeta \mapsto (\frac{2}{A^{-1} + 1})^{1/2}\zeta$ defines an isometric embedding of $H_{\R}$ into $H$. Put $K_{\R} := j(H_{\R})$. It is easy to see that $K_\R \cap {\rm i} K_\R = \{0\}$ and that $K_\R + {\rm i} K_\R$ is dense in $H$. Write $T = I A^{-1/2}$. Then $T$ is a conjugate-linear closed invertible operator on $H$ satisfying $T = T^{-1}$ and $T^*T = A^{-1}$. Such an operator is called an {\it involution} on $H$. Moreover, we have $\dom(T) = \dom(A^{-1/2})$ and $K_\R = \{ \xi \in \dom(T) : T \xi = \xi \}$. In what follows, we will simply write
$$\overline{\xi + {\rm i} \eta} := T(\xi + {\rm i} \eta) = \xi - {\rm i} \eta, \forall \xi, \eta \in K_\R.$$

We introduce the \emph{full Fock space} of $H$:
\begin{equation*}
\mathcal{F}(H) =\C\Omega \oplus \bigoplus_{n = 1}^{\infty} H^{\otimes n}.
\end{equation*}
The unit vector $\Omega$ is called the \emph{vacuum vector}. For all $\xi \in H$, define the {\it left creation} operator $\ell(\xi) : \mathcal{F}(H) \to \mathcal{F}(H)$ by
\begin{equation*}
\left\{ 
{\begin{array}{l} \ell(\xi)\Omega = \xi, \\ 
\ell(\xi)(\xi_1 \otimes \cdots \otimes \xi_n) = \xi \otimes \xi_1 \otimes \cdots \otimes \xi_n.
\end{array}} \right.
\end{equation*}
We have $\|\ell(\xi)\|_\infty = \|\xi\|$ and $\ell(\xi)$ is an isometry if $\|\xi\| = 1$. For all $\xi \in K_\R$, put $W(\xi) := \ell(\xi) + \ell(\xi)^*$. The crucial result of Voiculescu \cite[Lemma 2.6.3]{VDN92} is that the distribution of the self-adjoint operator $W(\xi)$ with respect to the vector state $\varphi_U = \langle \, \cdot \,\Omega, \Omega\rangle$ is the semicircular law of Wigner supported on the interval $[-\|\xi\|, \|\xi\|]$. 

\begin{df}[Shlyakhtenko, \cite{Sh96}]
Let $H_\R$ be any real Hilbert space and $U : \R \curvearrowright H_\R$ any orthogonal representation. The \emph{free Araki--Woods} von Neumann algebra associated with $U : \R \curvearrowright H_\R$ is defined by
\begin{equation*}
\Gamma(H_\R, U)\dpr := \left\{W(\xi) : \xi \in K_{\R} \right\}\dpr.
\end{equation*}
We will denote by $\Gamma(H_{\R}, U)$ the unital $\rC^*$-algebra generated by $1$ and by all the elements $W(\xi)$ for $\xi \in K_\R$.
\end{df}

The vector state $\varphi_U = \langle \, \cdot \,\Omega, \Omega\rangle$ is called the {\it free quasi-free state} and is faithful on $\Gamma(H_\R, U)\dpr$. Let $\xi, \eta \in K_\R$ and write $\zeta = \xi + {\rm i} \eta$. Put
\begin{equation*}
W(\zeta) := W(\xi) +  {\rm i} W(\eta) = \ell(\zeta) + \ell(\overline \zeta)^*.
\end{equation*}
Note that the modular automorphism  group $\sigma^{\varphi_U}$ of the free quasi-free state $\varphi_U$ is given by $\sigma^{\varphi_U}_{t} = \Ad(\mathcal{F}(U_t))$, where $\mathcal{F}(U_t) = 1_{\C \Omega} \oplus \bigoplus_{n \geq 1} U_t^{\otimes n}$. In particular, it satisfies
\begin{equation*}
\sigma_{t}^{\varphi_U}(W(\zeta))  =  W(U_t \zeta), \forall \zeta \in K_\R +{\rm i} K_\R, \forall t \in \R. 
\end{equation*}

It is easy to see that for all $n \geq 1$ and all $\zeta_1, \dots, \zeta_n \in K_\R + {\rm i} K_\R$, $\zeta_1 \otimes \cdots \otimes \zeta_n \in \Gamma(H_\R, U)\dpr \Omega$. When $\zeta_1, \dots, \zeta_n$ are all nonzero, we will denote by $W(\zeta_1 \otimes \cdots \otimes  \zeta_n) \in \Gamma(H_\R, U)\dpr$ the unique element such that 
\[\zeta_1 \otimes \cdots \otimes \zeta_n = W(\zeta_1 \otimes \cdots \otimes \zeta_n) \Omega.\]
Such an element is called a {\it reduced word}. By \cite[Proposition 2.1 (i)]{HR14} (see also \cite[Proposition 2.4]{Ho12a}), the reduced word $W(\zeta_1 \otimes \cdots \otimes \zeta_n)$ satisfies the {\em Wick formula} given by
$$W(\zeta_1 \otimes \cdots \otimes \zeta_n) = \sum_{k = 0}^n \ell(\zeta_1) \cdots \ell(\zeta_k) \ell(\overline \zeta_{k + 1})^* \cdots \ell(\overline \zeta_n)^*.$$

Note that since inner products are assumed to be linear in the first variable, we have $\ell(\xi)^*\ell(\eta) = \overline{\langle \xi, \eta\rangle} 1 = \langle \eta, \xi \rangle 1$ for all $\xi, \eta \in H$. In particular, the Wick formula from \cite[Proposition 2.1 (ii)]{HR14} is 
\begin{align*}
& W(\xi_1 \otimes \cdots \otimes \xi_r) W(\eta_1 \otimes \cdots \otimes \eta_s) \\
&= W(\xi_1 \otimes \cdots \otimes \xi_r \otimes \eta_1 \otimes \cdots \otimes \eta_s) + 
      \overline{\langle \overline \xi_r, \eta_1\rangle} \, W(\xi_1 \otimes \cdots \otimes \xi_{r - 1}) W(\eta_2 \otimes \cdots \otimes \eta_s)
\end{align*}
for all $\xi_1, \dots, \xi_r, \eta_1, \dots, \eta_s \in K_\R + {\rm i} K_\R$. We will repeatedly use this fact  in the next section. We refer to \cite[Section 2]{HR14} for further details.

\section{Asymptotic orthogonality property in free Araki--Woods factors}

Let $U : \R \curvearrowright H_\R$ be any orthogonal representation. Using Zorn's lemma, we may decompose $H_\R = H_\R^{\ap} \oplus H_\R^{\wm}$ and $U = U^{\wm} \oplus U^{\ap}$ where $U^{\text{ap}} : \R \curvearrowright H_\R^{\ap}$ (resp.\ $U^{\wm} : \R \curvearrowright H_\R^{\wm}$) is the {\em almost periodic} (resp.\ {\em weakly mixing}) subrepresentation of $U : \R \curvearrowright H_\R$. Write $M = \Gamma(H_\R, U)\dpr$, $N = \Gamma(H_\R^{\ap}, U^{\ap})\dpr$ and $P = \Gamma(H_\R^{\wm}, U_t^{\wm})\dpr$ so that 
$$(M, \varphi_U) = (N, \varphi_{U^{\ap}}) \ast (P, \varphi_{U^{\wm}}).$$
For notational convenience, we simply write $\varphi := \varphi_U$.

The main result of this section, Theorem \ref{thm-AOP-FAW} below, strengthens and generalizes \cite[Theorem 4.3]{HR14}.

\begin{thm}\label{thm-AOP-FAW}
Keep the same notation as above. Let $\omega \in \beta(\N) \setminus \N$ be any nonprincipal ultrafilter. For all $a \in M \ominus N$, all $b \in M$ and all $x, y \in (M^{\omega})^{\varphi^\omega} \cap (M^\omega \ominus M)$, we have
\[\varphi^\omega(b^*y^* a x) = 0.\]
\end{thm}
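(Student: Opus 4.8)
The plan is to reduce the statement to a computation with reduced words and exploit the weak mixing of $\sigma^{\varphi}$ on $P$ together with the modular invariance of the central-like elements $x,y$. First I would note that by Kaplansky-type density arguments and normality of $\varphi^\omega$ it suffices to treat the case where $a$ is a finite linear combination of reduced words $W(\zeta_1 \otimes \cdots \otimes \zeta_n)$ with each letter $\zeta_i$ lying in either the ``$N$-part'' or the ``$P$-part'' of $K_\R + \mathrm{i} K_\R$, and similarly for $b$; and where $x = (x_k)^\omega$, $y = (y_k)^\omega$ are represented by bounded sequences $x_k, y_k \in M \ominus M$... more precisely, by approximating, we may take $x_k, y_k$ to be finite sums of reduced words orthogonal to the constants. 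The condition $x,y \in (M^\omega)^{\varphi^\omega}$ means $\|\sigma^{\varphi^\omega}_t(x) - x\|_{\varphi^\omega} \to 0$, which via $\sigma^{\varphi}_t = \mathrm{Ad}(\mathcal F(U_t))$ translates into an asymptotic $U_t$-invariance of the representing sequences, uniformly in $t$ on compact sets.

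The heart of the matter is then the expansion of $\varphi^\omega(b^* y^* a x) = \lim_{k \to \omega} \varphi(b^* y_k^* a x_k)$ using the Wick formula repeatedly to multiply out $y_k^* a x_k$ and then pair with $b^*$ via $\langle \cdot\, \Omega, b\,\Omega\rangle$. Because $a \in M \ominus N$, in every reduced word appearing in $a$ at least one letter comes from the weakly mixing part $H^{\wm}$. The key estimate to establish is an \emph{asymptotic orthogonality}: for the inner product $\varphi(b^* y_k^* a x_k)$ to be nonzero, the letters of $a$ lying in the $P$-part must be contracted against letters of $y_k^*$ or $x_k$; but $x_k, y_k$ are (asymptotically) $U_t$-invariant while a weakly mixing representation has no almost-invariant vectors, so averaging over $t$ and using $\sigma^{\varphi}_t$-invariance of $\varphi$ forces these contractions to vanish in the limit $k \to \omega$. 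Concretely, one inserts $\sigma^{\varphi}_t$ on everything, uses invariance of $\varphi$ to write $\varphi(b^* y_k^* a x_k) = \varphi(\sigma^{\varphi}_t(b^*)\, \sigma^{\varphi}_t(y_k^*)\, \sigma^{\varphi}_t(a)\, \sigma^{\varphi}_t(x_k))$, replaces $\sigma^{\varphi}_t(y_k^*), \sigma^{\varphi}_t(x_k)$ by $y_k^*, x_k$ up to an error tending to $0$ as $k \to \omega$ (uniformly for $t$ in a compact set), and then averages over $t \in [0,S]$; the weakly mixing letters of $a$ get averaged to something of arbitrarily small norm as $S \to \infty$, by the mean ergodic theorem applied to $U^{\wm}$ having no nonzero invariant vectors, combined with a careful bookkeeping of which Wick contractions survive.

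The main obstacle I anticipate is the combinatorial control of the Wick expansion: when multiplying $y_k^* a x_k$ one gets a large alternating sum of reduced words with various contractions between adjacent letters, and one must track precisely which terms can contribute a nonzero pairing against $b\,\Omega$ and verify that in every such term a $P$-letter of $a$ is genuinely ``consumed'' by being contracted against a letter originating in $x_k$ or $y_k$ — rather than, say, against another letter of $a$ or of $b$, which would not be killed by the averaging. Handling the letters of $a$ that lie in the $N$-part requires care: they may legitimately contract against letters of $b$, so the argument must isolate the $P$-letters and show the remaining $N$-part structure plus the orthogonality $x,y \in M^\omega \ominus M$ (ensuring $x_k,y_k$ have no scalar part, so no ``free'' contraction is available) leaves no surviving term. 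I would organize this as: (1) reduce to reduced words; (2) reformulate $\varphi^\omega$-centrality as uniform asymptotic $U_t$-invariance; (3) prove a lemma that a single weakly mixing letter, after $t$-averaging against asymptotically invariant vectors, contributes vanishing norm; (4) run the Wick expansion and apply (3) to conclude. Steps (3)–(4) together constitute the technical core and mirror, in this type ${\rm III}$ free-probabilistic setting, Popa's asymptotic orthogonality arguments for free group factors.
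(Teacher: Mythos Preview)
Your high-level strategy --- reduce to reduced words, exploit the modular invariance of $x,y$ together with the weak mixing of $U^{\wm}$ --- is the same as the paper's. But the specific mechanism you propose (insert $\sigma_t^\varphi$, replace $\sigma_t^\varphi(x_k),\sigma_t^\varphi(y_k)$ by $x_k,y_k$, then Ces\`aro-average over $t\in[0,S]$ and let $S\to\infty$) has a genuine gap, and it is exactly the obstacle you flag without resolving. After the replacement you are averaging $\varphi(\sigma_t^\varphi(b)^*\, y_k^*\, \sigma_t^\varphi(a)\, x_k)$; since $a$ and $b$ move \emph{together} under $\sigma_t^\varphi$, any pairing between a $P$-letter of $a$ and a $P$-letter of $b$ yields $\langle U_t\xi, U_t\eta\rangle = \langle\xi,\eta\rangle$, which is $t$-independent and is not killed by averaging. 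More globally, your step (3) as stated is false: the mean ergodic theorem applied to $\frac{1}{S}\int_0^S \sigma_t^\varphi(a)\,dt$ does \emph{not} produce a small element, because $U^{\otimes n}$ acting on $H^{\otimes n}$ can have nonzero invariant vectors even when some tensor factor lies in $H^{\wm}$. So the entire weight falls on step (4), namely the combinatorial claim that in every surviving Wick term a $P$-letter of $a$ is contracted against a letter of $x_k$ or $y_k$, and you give no argument for this.

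The paper sidesteps the Wick combinatorics altogether by working geometrically in the Fock space $\mathcal H = \mathcal F(H)$. After reducing to reduced words and writing $a = a'\,W(\xi_1\otimes\cdots\otimes\xi_p)\,a''$, $b = b'\,W(\eta_1\otimes\cdots\otimes\eta_q)\,b''$ with $a',a'',b',b''$ words in $N$ and $\xi_1,\xi_p,\eta_1,\eta_q$ in the weakly mixing part, it fixes the finite-dimensional subspace $L \subset H^{\wm}$ spanned by these boundary letters and decomposes $\mathcal H = \C\Omega \oplus \overline{\mathcal X_1 + \mathcal X_2} \oplus \mathcal Y$, where $\mathcal X_1$ (resp.\ $\mathcal X_2$) consists of tensors whose first (resp.\ last) letter lies in $L$, and $\mathcal Y$ of tensors whose first and last letters lie in $L^\perp$. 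The core step shows that for $z=(z_n)^\omega\in(M^\omega)^{\varphi^\omega}$ and any $N$-words $w_1,w_2$ one has $\lim_{n\to\omega}\|P_{\mathcal X_i}(w_1 z_n w_2\,\Omega)\|=0$. Crucially, this is \emph{not} proved by Ces\`aro averaging: one uses weak mixing only to select finitely many times $t_1,\dots,t_N$ with $\kappa_{t_{j_1}}(\mathcal X_i) \perp_{1/N} \kappa_{t_{j_2}}(\mathcal X_i)$ for $j_1\neq j_2$, then combines $\sigma^{\varphi^\omega}$-invariance of $z$ (which makes the projection onto each translate asymptotically equal) with a Cauchy--Schwarz estimate \`a la Wen to get $\lim_{n\to\omega}\|P_{\mathcal X_i}(z_n\Omega)\|^2 \leq \sqrt{2}\,\|z\|_{\varphi^\omega}^2/\sqrt{N}$. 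Since also $\rE_\omega(x)=\rE_\omega(y)=0$ kills the $\C\Omega$-component, the conjugated vectors $a'' x_n \sigma_{-{\rm i}}^\varphi((b'')^*)\Omega$ and $(a')^* y_n b'\Omega$ live asymptotically in $\mathcal Y$. A direct Fock-space orthogonality computation (no averaging at all) then shows $W(\xi_1\otimes\cdots\otimes\xi_p)\,\mathcal Y \perp J\sigma_{-{\rm i}/2}^\varphi(W(\overline{\eta_q}\otimes\cdots\otimes\overline{\eta_1}))J\,\mathcal Y$, which handles the $a$--$b$ interaction cleanly and finishes the proof.
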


\begin{proof}
Denote as usual by $H := H_\R \otimes_\R \C$ the complexified Hilbert space and by $U : \R \curvearrowright H$ the corresponding unitary representation. Put $H^{\ap} := H_\R^{\ap} \otimes_\R \C$ and $H^{\wm} := H_\R^{\wm} \otimes_\R \C$. Put $K_\R := j(H_\R)$, $K_\R^{\ap} = j(H_\R^{\ap})$ and $K_\R^{\wm} := j(H_\R^{\wm})$, where $j$ is the isometric embedding $\xi \in H_\R \mapsto (\frac{2}{1 + A^{-1}})^{1/2} \xi \in H$. Denote by $\mathcal H = \mathcal F(H)$ the full Fock space of $H$. For every $t \in \R$, put $\kappa_t = 1_{\C \Omega} \oplus \bigoplus_{n \geq 1} U_t^{\otimes n} \in \mathcal U(\mathcal H)$. For every $t \in \R$ and every $x \in M$, we have $\sigma_t^\varphi(x) \Omega = \kappa_t(x \Omega)$. We will implicitly identify the full Fock space $\mathcal F(H)$ with the standard Hilbert space $\rL^2(M)$ and the vacuum vector $\Omega \in \mathcal H$ with the canonical representing vector $\xi_\varphi \in \rL^2(M)_+$.

Put $K_{\an} := \bigcup_{\lambda > 1} \mathbf 1_{[\lambda^{-1}, \lambda]}(A)(K_\R + {\rm i} K_\R)$. Observe that $K_{\an} \subset K_\R + {\rm i} K_\R$ is a dense subspace of elements $\eta \in K_\R + {\rm i} K_\R$ for which the map $\R \to K_\R + {\rm i} K_\R : t \mapsto U_t \eta$ extends to an $(K_\R + {\rm i} K_\R)$-valued entire analytic function and that $\overline{K_{\an}} = K_{\an}$. For all $\eta \in K_{\an}$, the element $W(\eta)$ is analytic with respect to the modular automorphism group $\sigma^{\varphi}$ and we have $\sigma_z^{\varphi}(W(\eta)) = W(A^{{\rm i}z}\eta)$ for all $z \in \C$.

Denote by $\mathcal W$ the set of reduced words of the form $W(\xi_1 \otimes \cdots \otimes \xi_n)$ for which $n \geq 1$, $\xi_1, \dots, \xi_n \in K_{\an}$. By linearity/density, in order to prove Theorem \ref{thm-AOP-FAW}, we may assume without loss of generality that $a$ and $b$ are reduced words in $\mathcal W$. Since moreover $a \in M \ominus N$, we can assume that at least one of its letters $\xi_i$ lies in $K_\R^{\wm} + {\rm i} K_\R^{\wm}$. More precisely, we can write
\begin{align*}
a &= a' \, W(\xi_{1} \otimes \cdots \otimes \xi_{p}) \, a\dpr \\
b &= b' \, W(\eta_{1} \otimes \cdots \otimes \eta_{q}) \, b\dpr
\end{align*}
with $p \geq 1$, $q \geq 0$ $a', a\dpr, b', b\dpr$ are reduced words in $N$ with letters in $K_{\an} \cap (K_\R^{\ap} + {\rm i} K_\R^{\ap})$, $\xi_{2}, \dots, \xi_{p - 1}$, $\eta_{2}, \dots, \eta_{q - 1} \in K_{\an}$ and $\xi_{1}, \xi_{p}, \eta_{1}, \eta_{q} \in K_{\an} \cap (K_\R^{\wm} + {\rm i} K_\R^{\wm})$. By convention, when $q = 0$, $W(\eta_{1} \otimes \cdots \otimes \eta_{q})$ is the trivial word $1$, so that $b = b'b\dpr$.

Denote by $L \subset K_\R^{\wm} + {\rm i} K_\R^{\wm}$ the finite dimensional subspace generated by $\xi_{1}, \xi_{p}, \eta_{1}, \eta_{q}$ and such that $\overline L = L$. If $q = 0$, then $L$ is simply the subspace generated by $\xi_{1}, \xi_{p}, \overline \xi_1, \overline \xi_p$. 

Denote by 
\begin{itemize}
\item $\mathcal X(1, r) \subset \mathcal H$ the closed linear subspace generated by all the reduced words of the form $e_1 \otimes \cdots \otimes e_n$ with $r \geq 0$, $n \geq r + 1$, $e_1, \dots, e_r \in K_\R^{\ap} + {\rm i}K_\R^{\ap}$ and $e_{r + 1} \in L$. When $r = 0$, simply denote $\mathcal X_1 := \mathcal X(1, 0)$.
\item $\mathcal X(2, r) \subset \mathcal H$ the closed linear subspace generated by all the reduced words of the form $e_1 \otimes \cdots \otimes e_n$ with $r \geq 0$, $n \geq r + 1$, $e_{n - r} \in L$ and $e_{n - r + 1}, \dots, e_n \in K_\R^{\ap} + {\rm i}K_\R^{\ap}$. When $r = 0$, simply denote $\mathcal X_2 := \mathcal X(2, 0)$.
\item $\mathcal Y \subset \mathcal H$ the closed linear subspace generated by all the reduced words of the form $e_1 \otimes \cdots \otimes e_n$ with $n \geq 1$ and $e_1, e_n  \in L^\perp$.
\end{itemize}
Observe that we have the following orthogonal decomposition
$$\mathcal H = \C \Omega \oplus \overline{\left (\mathcal X_1 + \mathcal X_2 \right)} \oplus \mathcal Y.$$

\begin{claim}\label{claim1}
Let $\varepsilon \geq 0$ and $t \in \R$ such that $U_t (L) \perp_{\varepsilon/\dim L} L$. Then for all $i \in \{1, 2\}$ and all $r \geq 0$, we have
$$\kappa_t (\mathcal X(i, r)) \perp_\varepsilon \mathcal X(i, r).$$ 
\end{claim}

\begin{proof}[Proof of Claim \ref{claim1}]
Choose an orthonormal basis $(\zeta_1, \dots, \zeta_{\dim L})$ of $L$. We first prove the claim for $i = 1$. We will identify $\mathcal X(1, r)$ with $L \otimes ((H^{\ap})^{\otimes r} \otimes \mathcal H)$ using the following unitary defined by
$$\mathcal V(1, r) : H \otimes (H^{\otimes r} \otimes \mathcal H) \to \mathcal H : \zeta \otimes \mu \otimes \nu \mapsto \mu \otimes \zeta \otimes \nu.$$
Observe that $\kappa_t \mathcal V(1, r) = \mathcal V(1, r) (U_t \otimes (U_t)^{\otimes r} \otimes \kappa_t)$ for every $t \in \R$. Let $\Xi_1, \Xi_2 \in \mathcal X(1, r)$ be such that $\Xi_1 = \sum_{i = 1}^{\dim L}  \zeta_i \otimes \Theta_i^1$ and $\Xi_2 = \sum_{j = 1}^{\dim L} \zeta_j \otimes \Theta_j^2$ with $\Theta_i^1, \Theta_j^2 \in (H^{\ap})^{\otimes r} \otimes \mathcal H$. We have $\kappa_t(\Xi_1) = \sum_{i = 1}^{\dim L} U_t(\zeta_i) \otimes \kappa_t(\Theta_i^1)$ and hence
$$|\langle \kappa_t(\Xi_1), \Xi_2\rangle| \leq \sum_{i,j = 1}^{\dim L} |\langle U_t(\zeta_i), \zeta_j \rangle| \|\Theta_i^1\| \|\Theta_j^2\|.$$
Since $|\langle U_t(\zeta_i), \zeta_j\rangle| \leq \varepsilon / \dim L$, we obtain $|\langle \kappa_t(\Xi_1), \Xi_2\rangle| \leq \varepsilon \|\Xi_1\| \|\Xi_2\|$ by the Cauchy--Schwarz inequality. The proof of the claim for $i = 2$ is entirely analogous.
\end{proof}

Given a closed subspace $\mathcal K \subset \cH$, we denote by $P_{\mathcal K} : \cH \to \mathcal K$ the orthogonal projection onto $\mathcal K$.

\begin{claim}\label{claim2}
Take $z = (z_n)^\omega \in (M^\omega)^{\varphi^\omega}$ and let $w_1, w_2 \in N$ be any elements of the following form:
\begin{itemize}
\item either $w_1 = 1$ or $w_1 = W(\zeta_1 \otimes \cdots \otimes \zeta_r)$ with $r \geq 1$ and $\zeta_1, \dots, \zeta_r \in K_{\an} \cap (K_\R^{\ap} +{\rm i} K_\R^{\ap})$.
\item either $w_2 = 1$ or $w_2 = W(\mu_1 \otimes \cdots \otimes \mu_s)$ with $s \geq 1$ and $\mu_1, \dots, \mu_s \in K_{\an} \cap (K_\R^{\ap} +{\rm i} K_\R^{\ap})$.
\end{itemize}
Then for all $i \in \{1, 2\}$, we have $\lim_{n \to \omega} \|P_{\mathcal X_i}(w_1z_nw_2 \Omega)\| = 0$.
\end{claim}

\begin{proof}[Proof of Claim \ref{claim2}]
Observe that $w_1 z_n w_2 \Omega = w_1 J \sigma_{-{\rm i}/2}^\varphi(w_2^*) J \, z_n \Omega$. Firstly, we have
\begin{align*}
P_{\mathcal X(1, r)}(J \sigma_{-{\rm i}/2}^\varphi(w_2^*) J \, z_n \Omega) &= J \sigma_{-{\rm i}/2}^\varphi(w_2^*) J P_{\mathcal X(1, r)}(z_n \Omega) \\
P_{\mathcal X(2, s)}(w_1 \, z_n \Omega) &= w_1 P_{\mathcal X(2, s)}(z_n \Omega).
\end{align*}
Secondly, for all $\Xi \in \mathcal H$, we have
\begin{align*}
P_{\cX_1}(w_1\Xi) & = P_{\mathcal X_1}(w_1 \, P_{\mathcal X(1, r)}(\Xi)) \\
P_{\cX_2}(J \sigma_{-{\rm i}/2}^\varphi(w_2^*) J\Xi) & = P_{\mathcal X_2}(J \sigma_{-{\rm i}/2}^\varphi(w_2^*) J \,P_{\mathcal X(2, s)}(\Xi)).
\end{align*}
This implies that
\begin{align*}
P_{\mathcal X_1}(w_1 z_n w_2 \Omega) &= P_{\mathcal X_1}(w_1 J \sigma_{-{\rm i}/2}^\varphi(w_2^*) J \, P_{\mathcal X(1, r)}(z_n \Omega)) \\
P_{\mathcal X_2}(w_1 z_n w_2 \Omega) &= P_{\mathcal X_2}(w_1 J \sigma_{-{\rm i}/2}^\varphi(w_2^*) J \, P_{\mathcal X(2, s)}(z_n \Omega)),
\end{align*}
and we are left to show that $\lim_{n \to \omega} \|P_{\mathcal X(1, r)}(z_n \Omega)\| = \lim_{n \to \omega} \|P_{\mathcal X(2, s)}(z_n \Omega)\| = 0$.

Let $i \in \{1, 2\}$ and $k \in \{r, s\}$. Fix $N \geq 0$. Since the orthogonal representation $U : \R \curvearrowright H_\R^{\wm}$ is weakly mixing and $L \subset H^{\wm}$ is a finite dimensional subspace, we may choose inductively $t_1, \dots, t_{N} \in \R$ such that $U_{t_{j_1}}(L) \perp_{(N\dim(L))^{-1}} U_{t_{j_2}}(L)$ for all $1 \leq j_1 < j_2 \leq N$. By Claim \ref{claim1}, this implies that \[\kappa_{t_{j_1}}(\mathcal X(i, k)) \perp_{1/N} \kappa_{t_{j_2}}(\mathcal X(i, k)), \forall 1 \leq j_1 < j_2 \leq N.\]
For all $t \in \R$ and all $n \in \N$, we have
\begin{align*}
\|P_{\mathcal X(i, k)}(z_n \Omega)\|^2 &= \langle P_{\mathcal X(i, k)}(z_n \Omega), z_n \Omega\rangle \\
&= \langle \kappa_t(P_{\mathcal X(i, k)}(z_n \Omega)), \kappa_t(z_n\Omega) \rangle \quad (\text{since } \kappa_t \in \mathcal U(\mathcal H))\\
&= \langle P_{\kappa_t(\mathcal X(i, k))}(\kappa_t(z_n \Omega)), \kappa_t(z_n \Omega) \rangle.
\end{align*}
By \cite[Theorem 4.1]{AH12}, for all $t \in \R$, we have $(z_n)^\omega = z = \sigma_t^{\varphi^\omega}(z) = (\sigma_t^\varphi(z_n))^\omega$. This implies that $\lim_{n \to \omega} \|\sigma_t^\varphi(z_n) - z_n\|_\varphi = 0$, and hence $\lim_{n \to \omega} \|\kappa_t(z_n\Omega) - z_n\Omega\| = 0$ for all $t \in \R$. In particular, since the sequence $(z_n \Omega)_n$ is bounded in $\mathcal H$, we deduce that for all $t \in \R$,
\[\lim_{n \to \omega} \|P_{\mathcal X(i, k)}(z_n \Omega)\|^2 = \lim_{n \to \omega} \langle P_{\kappa_t(\mathcal X(i, k))}(z_n \Omega), z_n \Omega \rangle.\]

Applying this equality to our well chosen reals $(t_j)_{1 \leq j \leq N}$, taking a convex combination and applying Cauchy--Schwarz inequality, we obtain
\begin{align*}
\lim_{n \to \omega} \|P_{\mathcal X(i, k)}(z_n \Omega)\|^2 &= \lim_{n \to \omega} \frac{1}{N}\sum_{j = 1}^{N} \langle P_{\kappa_{t_j}(\mathcal X(i, k))}(z_n \Omega), z_n\Omega\rangle \\
& =  \lim_{n \to \omega} \frac{1}{N} \left\langle \sum_{j = 1}^{N} P_{\kappa_{t_j}(\mathcal X(i, k))}(z_n \Omega), z_n\Omega \right\rangle \\
& \leq \lim_{n \to \omega} \frac{1}{N}\left\|\sum_{j = 1}^{N} P_{\kappa_{t_j}(\mathcal X(i, k))}(z_n \Omega)\right\| \|z_n\|_\varphi.
\end{align*}
Then, for all $n \in \N$ we have,
\begin{align*}
\left\|\sum_{j = 1}^{N} P_{\kappa_{t_j}(\mathcal X(i, k))}(z_n \Omega)\right\|^2 & = \sum_{j_1,j_2 = 1}^N \langle P_{\kappa_{t_{j_1}}(\mathcal X(i, k))}(z_n \Omega),P_{\kappa_{t_{j_2}}(\mathcal X(i, k))}(z_n \Omega)\rangle\\
& \leq \sum_{j=1}^N \|P_{\kappa_{t_j}(\mathcal X(i, k))}(z_n \Omega)\|^2 + \sum_{j_1 \neq j_2}^N \frac{\Vert z_n \Vert_\varphi^2}{N}\\
&\leq N\Vert z_n \Vert_\varphi^2 + N^2\frac{\Vert z_n \Vert_\varphi^2}{N} \\
&= 2N\Vert z_n \Vert_\varphi^2.
\end{align*}
Altogether, we have obtained the inequality $\lim_{n \to \omega} \|P_{\mathcal X(i, k)}(z_n \Omega)\|^2 \leq \sqrt{2}\Vert z  \Vert_{\varphi^\omega}^2/\sqrt{N}$. As $N$ is arbitrarily large, this finishes the proof of Claim \ref{claim2}. The above argument is inspired from \cite[Lemma 10]{We15}. Alternatively, we could have used \cite[Proposition 2.3]{Ho12a}.
\end{proof}

\begin{claim}\label{claim3}
The subspaces $W(\xi_{1} \otimes \cdots \otimes \xi_{p}) \mathcal Y$ and $J \sigma_{-{\rm i}/2}^\varphi (W(\overline \eta_q \otimes \cdots \otimes \overline \eta_1)) J \mathcal Y$ are orthogonal in $\mathcal H$. Here, in the case $q = 0$, the vector space $J \sigma_{-{\rm i}/2}^\varphi (W(\overline \eta_q \otimes \cdots \otimes \overline \eta_1)) J \mathcal Y$ is nothing but $\cY$.
\end{claim}

\begin{proof}[Proof of Claim \ref{claim3}]
Let $m, n \geq 1$, $e_1, \dots, e_m, f_1, \dots, f_n \in H$ with $e_1,e_m, f_1, f_n \in L^\perp$ so that the vectors $e_1 \otimes \dots \otimes e_m$ and $f_1 \otimes \dots \otimes f_n$ belong to $\mathcal Y$. Since $\overline \xi_p \perp e_1$, $\overline f_n \perp \eta_1$ and $\xi_1 \perp f_1$, we have
\begin{align*}
& \langle W(\xi_{1} \otimes \cdots \otimes \xi_{p}) \, (e_1 \otimes \dots \otimes e_m), J \sigma_{-{\rm i}/2}^\varphi (W(\overline \eta_q \otimes \cdots \otimes \overline \eta_1)) J \, (f_1 \otimes \dots \otimes f_n)\rangle \\
&=  \langle W(\xi_{1} \otimes \cdots \otimes \xi_{p}) W(e_1 \otimes \dots \otimes e_m) \Omega, J \sigma_{-{\rm i}/2}^\varphi (W(\overline \eta_q \otimes \cdots \otimes \overline \eta_1)) J W(f_1 \otimes \dots \otimes f_n)\Omega\rangle \\
&=  \langle W(\xi_{1} \otimes \cdots \otimes \xi_{p}) W(e_1 \otimes \dots \otimes e_m) \Omega,  W(f_1 \otimes \dots \otimes f_n)W(\eta_1 \otimes \cdots \otimes \eta_q)\Omega\rangle \\
&= \langle W(\xi_{1} \otimes \cdots \otimes \xi_{p} \otimes e_1 \otimes \dots \otimes e_m) \Omega, W(f_1 \otimes \dots \otimes f_n \otimes \eta_1 \otimes \cdots \otimes \eta_q) \Omega\rangle \\
&= \langle \xi_{1} \otimes \cdots \otimes \xi_{p} \otimes e_1 \otimes \dots \otimes e_m, f_1 \otimes \dots \otimes f_n \otimes \eta_1 \otimes \cdots \otimes \eta_q\rangle \\
&= 0.
\end{align*}
Note that in the case $q = 0$, the above calculation still makes sense. Indeed we have
\[\langle W(\xi_{1} \otimes \cdots \otimes \xi_{p}) \, (e_1 \otimes \dots \otimes e_m), (f_1 \otimes \dots \otimes f_n)\rangle = \langle \xi_{1} \otimes \cdots \otimes \xi_{p} \otimes e_1 \otimes \dots \otimes e_m, f_1 \otimes \dots \otimes f_n \rangle = 0.\]

Since the linear span of all such reduced words $e_1 \otimes \dots \otimes e_m$ (resp.\ $f_1 \otimes \dots \otimes f_n$) generate $\mathcal Y$, we obtain that the subspaces $W(\xi_{1} \otimes \cdots \otimes \xi_{p}) \mathcal Y$ and $J \sigma_{-{\rm i}/2}^\varphi (W(\overline \eta_q \otimes \cdots \otimes \overline \eta_1)) J \mathcal Y$ are orthogonal in $\mathcal H$.
\end{proof}

Let $x, y \in (M^\omega)^{\varphi^\omega}  \cap (M^\omega \ominus M)$. We have
\begin{align*}
 \varphi^\omega(b^* y^* a x) &= \langle ax \xi_{\varphi^\omega}, yb\xi_{\varphi^\omega}\rangle  \\
&= \lim_{n \to \omega} \langle ax_n \xi_\varphi, y_nb \xi_\varphi\rangle  \\
& =\lim_{n \to \omega} \langle a' W(\xi_1 \otimes \cdots \otimes \xi_p) a\dpr \, x_n \Omega, y_n \, b' W(\eta_1 \otimes \cdots \otimes \eta_q) b\dpr\Omega \rangle \\
& = \lim_{n \to \omega} \langle  W(\xi_1 \otimes \cdots \otimes \xi_p) \, a\dpr x_n \sigma_{-{\rm i}}^\varphi((b\dpr)^*) \Omega, J \sigma_{-{\rm i}/2}^\varphi(W(\overline \eta_q \otimes \cdots \otimes \overline \eta_1)) J \, (a')^* y_n b' \Omega \rangle.
\end{align*}
Put $z_n = a\dpr x_n \sigma_{-{\rm i}}^\varphi((b\dpr)^*)$ and $z'_n = (a')^* y_n b'$.
By Claim \ref{claim2}, we have that $\lim_{n \to \omega} \|P_{\mathcal X_i}(z_n \Omega)\| = \lim_{n \to \omega} \|P_{\cX_i}(z'_n \Omega)\| = 0$ for all $i \in \{ 1,2\}$. Since moreover $\rE_\omega(x) = \rE_\omega(y) = 0$, we see that $\lim_{n \to \omega} \|P_{\C\Omega}(z_n \Omega)\| = \lim_{n \to \omega} \|P_{\C\Omega}(z'_n \Omega)\| = 0$. Since $\mathcal H = \C \Omega \oplus \overline{\left (\mathcal X_1 + \mathcal X_2 \right)} \oplus \mathcal Y$, we obtain 
\[\lim_{n \to \omega} \Vert z_n \Omega - P_{\cY}(z_n \Omega)\Vert = 0 \qquad \text{and} \qquad \lim_{n \to \omega} \Vert z'_n \Omega - P_{\cY}(z'_n \Omega)\Vert = 0.\]
By Claim \ref{claim3}, we finally obtain
\begin{align*}
\varphi^\omega(b^* y^* a x) & = \lim_{n \to \omega} \langle  W(\xi_1 \otimes \cdots \otimes \xi_p) \, z_n \Omega, J \sigma_{-{\rm i}/2}^\varphi(W(\overline \eta_q \otimes \cdots \otimes \overline \eta_1)) J \,z'_n \Omega \rangle \\
&= \lim_{n \to \omega} \langle  W(\xi_1 \otimes \cdots \otimes \xi_p) \, P_{\cY}(z_n \Omega), J \sigma_{-{\rm i}/2}^\varphi(W(\overline \eta_q \otimes \cdots \otimes \overline \eta_1)) J \,P_{\cY}(z'_n \Omega) \rangle \\
&= 0.
\end{align*}
This finishes the proof of Theorem \ref{thm-AOP-FAW}.
\end{proof}

\section{Proof of the main theorem}

We start by proving the following intermediate result.

\begin{thm}\label{thm-amenable}
Let $(M, \varphi) = (\Gamma(H_\R, U)\dpr, \varphi_U)$ be any free Araki--Woods factor endowed with its free quasi-free state. Keep the same notation as in the introduction. Let $q \in M^\varphi = N^{\varphi_{U^{\ap}}}$ be any nonzero projection. Write $\varphi_q = \frac{\varphi(q \, \cdot \, q)}{\varphi(q)}$.

Then for any amenable von Neumann subalgebra $Q \subset qMq$ that is globally invariant under the modular automorphism group $\sigma^{\varphi_q}$, we have $Q \subset qNq$.
\end{thm}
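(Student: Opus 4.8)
The plan is to combine the asymptotic orthogonality property (Theorem \ref{thm-AOP-FAW}) with a spectral-gap/rigidity argument in the ultraproduct, following the strategy of \cite[Theorem 4.4]{HR14}. First I would set up the framework: since $Q \subset qMq$ is globally invariant under $\sigma^{\varphi_q}$ and $q \in M^\varphi$, the modular automorphism group of $\varphi_q$ is just the restriction of $\sigma^\varphi$ to $qMq$, so $Q$ has a $\sigma^\varphi$-invariant conditional expectation and the inclusion $Q \subset qMq \subset M$ is with expectation. Passing to ultraproducts (using the material on $M^\omega$, $\varphi^\omega$, and $\rE_{Q^\omega}$ from the preliminaries), the key input is that an amenable von Neumann algebra with separable predual has a $Q$-central net, hence $Q' \cap Q^\omega \subset M^\omega$ contains a copy of the hyperfinite ${\rm II}_1$ factor, or more precisely $Q$ embeds into its own relative commutant in a suitable amplification. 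The crucial refinement available here is that, by \cite[Theorem 4.1]{AH12}, one can choose these central sequences inside the centralizer $(M^\omega)^{\varphi^\omega}$: the amenability of $Q$ together with $\sigma^{\varphi_q}$-invariance produces elements $x, y \in Q' \cap (M^\omega)^{\varphi^\omega}$ (after cutting by $q$, working in $q M^\omega q$) that witness non-amenability obstructions unless $Q$ sits in $N$.

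Next I would run the dichotomy. Suppose $Q \not\subset qNq$. Then there exists a nonzero element $a_0 \in Q$ with $\rE_N(a_0) = 0$, i.e. $a_0 \in q(M \ominus N)q$ up to the expectation; more carefully, writing $a = a_0 - \rE_{qNq}(a_0) \in M \ominus N$ one gets a nonzero such $a \in Q$. The amenability of $Q$ gives, for every $\varepsilon > 0$ and every finite $F \subset Q$, a unit vector (or a net of unit vectors) in $q \rL^2(M) q$ that is almost central for $F$ and almost under the $\varphi$-trace; promoting this to the ultraproduct yields nonzero $x \in (M^\omega)^{\varphi^\omega} \cap (M^\omega \ominus M)$ (the orthogonality to $M$ coming from subtracting $\rE_\omega$) which almost commutes with $a$, so that $ax - xa$ is small while $x$ is not. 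But then computing $\varphi^\omega(x^* (ax - xa)) = \varphi^\omega(x^* a x) - \varphi^\omega(x^* x a)$: the first term vanishes by Theorem \ref{thm-AOP-FAW} (take $b = 1$, $y = x$), and $\varphi^\omega(x^* x a) = \varphi^\omega(a x^* x)$ by the $\varphi^\omega$-traciality on the centralizer, which is again controlled by Theorem \ref{thm-AOP-FAW} applied with the roles arranged so that the reduced-word part of $a$ forces orthogonality. Pushing this through, near-commutation of $x$ with all of $Q$ forces $\|x\|_{\varphi^\omega}$-sized quantities to vanish, contradicting $x \neq 0$.

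Concretely, the cleanest route is: amenability of $Q$ $\Rightarrow$ existence of a net of unit vectors $(\xi_k)$ in $\rL^2(qMq)$ with $\|a \xi_k - \xi_k a\|_2 \to 0$ for all $a \in Q$ and $\langle \cdot \xi_k, \xi_k\rangle \to \varphi_q$; encode these as a state, or directly as a central element in an ultrapower, landing in $(M^\omega)^{\varphi^\omega}$ after an averaging over the modular flow (legitimate because $\sigma^{\varphi}$ is weakly mixing on $P$ and almost periodic on $N$, so the averaging kills exactly the $P$-directions that obstruct centrality). Then Theorem \ref{thm-AOP-FAW} says that the $M \ominus N$ part of $Q$ cannot "see" the centralizer-part of $M^\omega \ominus M$, which is precisely where the almost-central vectors for a non-amenable-if-not-in-$N$ algebra would have to live; this yields $Q \subset qNq$.

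The main obstacle I anticipate is the passage from abstract amenability of $Q$ to honestly produced almost-central elements lying in $(M^\omega)^{\varphi^\omega} \cap (M^\omega \ominus M)$ — i.e. simultaneously almost central, in the centralizer of $\varphi^\omega$, and orthogonal to $M$. Getting all three properties at once requires care: one typically starts from a $\varphi$-state on $\langle M, e_{qNq}\rangle$ or uses the hyperfinite subalgebra coming from Connes' characterization of amenability, then averages over $\R$ with a Følner-type argument for the modular flow to enter the centralizer, and subtracts $\rE_\omega$ to enter $M^\omega \ominus M$ — and one must check that the relevant norms do not collapse in the process, which is where the weak mixing of $\sigma^\varphi|_P$ (already exploited inside Claim \ref{claim2}) does the essential work. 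Once that element is in hand, the conclusion follows formally from Theorem \ref{thm-AOP-FAW} by a standard convexity/spectral-gap argument.
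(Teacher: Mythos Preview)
Your proposal has the right endgame in mind --- use Theorem~\ref{thm-AOP-FAW} together with a $Q$-central element of $(M^\omega)^{\varphi^\omega}\cap(M^\omega\ominus M)$ --- but it has a genuine gap at exactly the point you flag as the ``main obstacle,'' and your suggested workaround does not close it.

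The difficulty is this: starting from an amenability-produced $Q$-central element $x$ in $(qMq)^\omega$, subtracting $\rE_\omega(x)$ to land in $M^\omega\ominus M$ may kill $x$ entirely, because $\rE_\omega(x)\in Q'\cap qMq$ and there is no a priori control on the size of that relative commutant. Averaging over the modular flow does nothing to prevent this collapse. The paper's solution is a structural reduction that you are missing: one first arranges that $Q'\cap qMq=\C q$. This is done by (i) splitting off the atomic part of $Q$ (which lands in $N$ for almost-periodicity reasons), (ii) using solidity of $M$ \cite[Theorem~A]{HR14} to see that $Q'\cap qMq$ is amenable, so one may enlarge $Q$ to $Q\vee(Q'\cap qMq)$, and (iii) decomposing over the center, handling the diffuse-center piece via \cite[Proposition~2.7(1)]{HU15} (since $\mathcal Z(Q)\subset M^\varphi\subset N$), and reducing the atomic-center pieces to the irreducible case. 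Once $Q'\cap qMq=\C q$, any trace-zero unitary $u\in Q'\cap((qMq)^\omega)^{\varphi_q^\omega}$ --- and such unitaries exist because this algebra is diffuse by \cite[Theorem~2.3]{HR14} --- automatically satisfies $\rE_\omega(u)=\varphi_q^\omega(u)q=0$, hence $u\in(M^\omega)^{\varphi^\omega}\cap(M^\omega\ominus M)$. No averaging or norm-survival argument is needed.

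Two smaller points. First, your claim that $a=a_0-\rE_{qNq}(a_0)\in Q$ is false: $\rE_{qNq}$ has no reason to preserve $Q$. The paper never tries to find an element of $Q\cap(M\ominus N)$; instead, for arbitrary $a\in Q$ it applies Theorem~\ref{thm-AOP-FAW} to $a-\rE_N(a)\in M\ominus N$ (which need not lie in $Q$) and computes $\|a\|_\varphi^2=\varphi^\omega(a^*u^*au)=\varphi^\omega(a^*u^*\rE_N(a)u)=\|\rE_N(a)\|_\varphi^2$ using exact commutation $au=ua$. Second, working with exact commutation rather than ``almost commutes'' is what makes this computation a two-line identity rather than an approximation argument; since $u$ genuinely lies in $Q'\cap(qMq)^\omega$, there is no need for the approximate version.
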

\begin{proof}
We may assume that $Q$ has separable predual. Indeed, let $x \in Q$ be any element and denote by $Q_0 \subset Q$ the von Neumann subalgebra generated by $x \in Q$ and that is globally invariant under the modular automorphism group $\sigma^{\varphi_q}$. Then $Q_0$ is amenable and has separable predual. Therefore, we may assume without loss of generality that $Q_0 = Q$, that is, $Q$ has separable predual.

{\bf Special case.} We first prove the result when $Q \subset qMq$ is globally invariant under $\sigma^{\varphi_q}$ and is an irreducible subfactor meaning that $Q' \cap qMq = \C q$. 

Let $a \in Q$ be any element. Since $Q$ is amenable and has separable predual, $Q' \cap (qMq)^\omega$ is diffuse and so is $Q' \cap ((qMq)^\omega)^{\varphi_q^\omega}$ by \cite[Theorem 2.3]{HR14}. In particular, there exists a unitary $u \in \mathcal U(Q' \cap ((qMq)^\omega)^{\varphi_q^\omega})$ such that $\varphi_q^\omega(u) = 0$. Note that $\rE_\omega(u) \in Q' \cap qMq = \C q$ and hence $\rE_\omega(u) = \varphi_q^\omega(u) = 0$ so that $u \in (M^\omega)^{\varphi^\omega} \cap (M^\omega \ominus M)$. Theorem \ref{thm-AOP-FAW} yields $\varphi^\omega(a^*u^*(a - \rE_N(a))u) = 0$. Since moreover $au = ua$ and $u \in \mathcal U((qMq)^{\varphi_q^\omega})$, we have
\begin{align*}
\|a\|_\varphi^2 &= \| au\|^2_{\varphi^\omega} \\
&= \varphi^\omega(u^* a^* a u) = \varphi^\omega(a^*u^*au) \\
&= \varphi^\omega(a^*u^*\rE_N(a)u) = \varphi^\omega(ua^*u^*\rE_N(a)) \\
&= \varphi(a^* \rE_N(a)) \\
&= \|\rE_N(a)\|_\varphi^2.
\end{align*}
This shows that $a = \rE_N(a) \in N$.

{\bf General case.} We next prove the result when $Q \subset qMq$ is any amenable subalgebra globally invariant under $\sigma^{\varphi_q}$.

Denote by $z \in \mathcal Z(Q) \subset N^\varphi$ the unique central projection such that $Qz$ is atomic and $Q(1-z)$ is diffuse. Since $Qz$ is atomic and globally invariant under the modular automorphism group $\sigma^{\varphi_z}$, we have that $\varphi_z|_{Qz}$ is almost periodic and hence $Qz \subset N$. It remains to prove that $Q(1-z) \subset N$. Cutting down by $1 - z$ if necessary, we may assume that $Q$ itself is diffuse.

Since $Q \subset qMq$ is diffuse and with expectation and since $M$ is solid (see \cite[Theorem A]{HR14} and \cite[Theorem 7.1]{HI15} which does not require separability of the predual), the relative commutant $Q' \cap qMq$ is amenable. Up to replacing $Q$ by $Q \vee Q' \cap qMq$ which is still amenable and globally invariant under the modular automorphism group $\sigma^{\varphi_q}$, we may assume that $Q' \cap qMq = \mathcal Z(Q)$. Denote by $(z_n)_n$ a sequence of central projections in $\mathcal Z(Q)$ such that $\sum_n z_n = q$, $(Qz_0)' \cap z_0Mz_0 = \mathcal Z(Q)z_0$ is diffuse and $(Qz_n)' \cap z_n M z_n = \C z_n$ for every $n \geq 1$. 
 \begin{itemize}
 \item By the {\bf Special case} above, we know that $Qz_n \subset N$ for all $n \geq 1$.
\item Since $\cZ(Q)z_0 \oplus (1-z_0)N(1-z_0)$ is diffuse and with expectation in $N$, its relative commutant inside $M$ is contained in $N$ by \cite[Proposition 2.7(1)]{HU15}. In particular, we have $Qz_0 \subset N$.
\end{itemize}
Therefore, we have $Q \subset N$.
\end{proof}

\begin{proof}[Proof of the main theorem]
Put $\varphi := \varphi_U$. Denote by $z \in \mathcal Z(Q) \subset M^\varphi = N^\varphi$ the unique central projection such that $Qz$ is amenable and $Qz^\perp$ has no nonzero amenable direct summand. By Theorem \ref{thm-amenable}, we have $Qz \subset zNz$. Next, fix $\omega \in \beta(\N) \setminus \N$ any nonprincipal ultrafilter. By \cite[Theorem A]{HR14} (see also \cite[Theorem 7.1]{HI15}), we have that $(Q' \cap M^\omega)z^\perp = (Q' \cap M)z^\perp$ is atomic.
\end{proof}

\bibliographystyle{plain}

\begin{thebibliography}{VDN92}

\bibitem[AH12]{AH12} {\sc H. Ando, U. Haagerup}, {\it Ultraproducts of von Neumann algebras.} J. Funct. Anal. {\bf 266} (2014), 6842--6913.

\bibitem[Ho12a]{Ho12a} {\sc C. Houdayer}, {\em A class of ${\rm II_1}$ factors with an exotic abelian maximal amenable subalgebra}. Trans. Amer. Math. Soc. {\bf 366} (2014), 3693--3707.

\bibitem[Ho12b]{Ho12b} {\sc C. Houdayer}, {\it Structure of ${\rm II_1}$ factors arising from free Bogoljubov actions of arbitrary groups.} Adv. Math. {\bf 260} (2014), 414--457.

\bibitem[Ho14]{Ho14} {\sc C. Houdayer}, {\it Gamma stability in free product von Neumann algebras.} Comm. Math. Phys. {\bf 336} (2015), 831--851.

\bibitem[HI15]{HI15} {\sc C. Houdayer, Y. Isono}, {\it Bi-exact groups, strongly ergodic actions and group measure space type ${\rm III}$ factors with no central sequence.} To appear in Comm. Math. Phys. {\tt arXiv:1510.07987}

\bibitem[HR14]{HR14} {\sc C. Houdayer, S. Raum}, {\it Asymptotic structure of free Araki--Woods factors.} Math. Ann. {\bf 363} (2015), 237--267.

\bibitem[HU15]{HU15} {\sc C. Houdayer, Y. Ueda}, {\it Asymptotic structure of free product von Neumann algebras.} Math. Proc. Cambridge Philos. Soc. {\bf 161} (2016), 489--516.

\bibitem[Oc85]{Oc85} {\sc A. Ocneanu}, {\it Actions of discrete amenable groups on von Neumann algebras.} Lecture Notes in Mathematics, {\bf 1138}. Springer-Verlag, Berlin, 1985. iv+115 pp.

\bibitem[Po83]{Po83} {\sc S. Popa}, {\it Maximal injective subalgebras in factors associated with free groups.} Adv. Math. {\bf 50} (1983), 27--48.

\bibitem[Sh96]{Sh96} {\sc D. Shlyakhtenko}, {\it Free quasi-free states.} Pacific J. Math. {\bf 177} (1997), 329--368. 

\bibitem[Vo85]{Vo85} {\sc D.-V. Voiculescu}, {\it Symmetries of some reduced free product $\rC^*$-algebras.} Operator algebras and Their Connections with Topology and Ergodic Theory, Lecture Notes in Mathematics {\bf 1132}. Springer-Verlag, (1985), 556--588.

\bibitem[VDN92]{VDN92} {\sc D.-V. Voiculescu, K.J. Dykema, A. Nica}, {\it Free random variables.} CRM Monograph Series {\bf 1}. American Mathematical Society, Providence, RI, 1992.

\bibitem[We15]{We15} {\sc Chenxu Wen}, {\it Maximal amenability and disjointness for the radial masa.} J. Funct. Anal. {\bf 270} (2016), 787--801. 

\end{thebibliography}

\end{document}